\newtheorem{theorem}{Theorem}
\newtheorem{lemma}[theorem]{Lemma}
\newtheorem*{goal}{Goal}
\numberwithin{theorem}{section}
\newcommand{\glnc}{\mathrm{GL}_n(\mathbb{C})}
\newcommand{\gald}{\mathrm{Gal}_{\Delta}}
\newcommand{\gal}{\mathrm{Gal}}
\begin{document}

\title{Complex reflection groups as differential Galois groups}
\author{
Carlos E.~Arreche, Avery Bainbridge, Ben Obert, Alavi Ullah\\
The University of Texas at Dallas}

\date{}

\maketitle

\begin{abstract}
Complex reflection groups comprise a generalization of Weyl groups of semisimple Lie algebras, and even more generally of finite Coxeter groups. They have been heavily studied since their introduction and complete classification in the 1950s by Shephard and Todd, due to their many applications to combinatorics, representation theory, knot theory, and mathematical physics, to name a few examples. For each given complex reflection group G, we explain a new recipe for producing an integrable system of linear differential equations whose differential Galois group is precisely G. We exhibit these systems explicitly for many (low-rank) irreducible complex reflection groups in the Shephard-Todd classification.
\end{abstract}

\section{Introduction}
Classical Galois theory associates with a polynomial $p(y)\in K[y]$ with coefficients in a field $K$ a finite group $G$ of permutations of its roots, called the \emph{Galois group}, which encodes their algebraic properties. Thus via Galois theory the study of polynomial equations becomes a chapter in applied group theory. In the opposite direction, given a field $K$ and a finite group $G$, the \emph{inverse Galois problem} \cite{serre} for $(K,G)$ asks whether there exists a polynomial over $K$ whose Galois group is $G$ -- if so, the \emph{constructive} inverse Galois problem further asks to construct such a polynomial explicitly. An analogous differential Galois theory \cite{vdps} associates with an integrable linear differential system over a differential field $K$ a linear algebraic group that encodes the algebraic properties of the solutions to the system. Also analogously there is an inverse Galois problem that asks whether a given linear algebraic group $G$ can be realized as a differential Galois group over a given differential field $K$, and a constructive version that asks for the explicit construction of some such system, if one exists. An important result of Kolchin (see Theorem~\ref{thm:kolchin} below) states that finite Galois extensions of differential fields are special cases of differential Galois extensions. Crucially, the same finite group acts in classical Galois theory by permutations on a finite set of roots, and in differential Galois theory by linear transformations of a vector space of solutions. For $K=\mathbb{C}(z)$ endowed with the standard derivation $\frac{d}{dz}$, the non-constructive versions of the inverse Galois problem, both classical and differential, are known to have positive answers. It is still often considered interesting in this case to compute explicitly the corresponding (polynomial or differential) equations realizing a given group as a Galois group. 

Complex reflection groups comprise a very well-studied class of finite groups of matrices, which are characterized by having the best possible invariant theory (see \S\ref{sec:crg} below for more details). It was observed by Chevalley in \cite{chevalley} that these groups are realized as Galois groups over fields of multivariate rational functions. In many cases, explicit polynomial equations making this a \emph{constructive} solution to the inverse Galois problem are not known (although it is clear theoretically how to compute them using elimination theory and Gr\"obner bases). In any case, since complex reflection groups are given as finite groups of matrices from the start, it would be more natural to realize them explicitly as \emph{differential} Galois groups. Indeed, this is accomplished in the celebrated \cite{beukers}, where Beukers and Heckman list explicit generalized hypergeometric linear differential equations realizing complex reflection groups as differential Galois groups over the field of univariate rational functions $\mathbb{C}(z)$. Using entirely different methods, we develop an algorithm for constructing explicit integrable systems of linear differential equations over multivariate rational function fields $\mathbb{C}(z_1,\dots,z_n)$ realizing any given complex reflection group as a \mbox{differential Galois group.}

\section{Differential Galois Theory} \label{sec:dgt}
We refer to \cite[Appendix D.2]{vdps} and \cite[Chapter VI]{kolchin} for the material in this section.  A $\Delta$-field is a field $K$ equipped with a finite set $\Delta = \{\delta_1,\dots,\delta_n\}$ of pairwise-commuting derivations on $K$. We call $C_K:=\{c\in K \ | \ \delta_i(c)=0 \ \text{for} \ i=1,\dots,n\}$ the field of $\Delta$\emph{-constants} of $K$. A (rank $N$) \emph{linear differential system} over the $\Delta$-field $K$ is a collection of equations $\mathcal{A}=\bigl\{\delta_i(\mathbf{y})=A_i\mathbf{y} \ \big| \ 1\leq i \leq n\bigr\}$, where $\mathbf{y}=(y_1,\dots,y_N)^\top$ is a vector of unknowns, the $N\times N$ coefficient matrices $A_1,\dots, A_n$ have entries in $K$, and the $\delta_i$ are applied coordinate-wise. We say the system $\mathcal{A}$ is \emph{integrable} if $\delta_i(A_j)-\delta_j(A_i)=A_iA_j-A_jA_i$ for all $1\leq i,j\leq n$. A $\Delta$-field extension $L\supseteq K$ is a \emph{Picard-Vessiot extension} for the integrable linear differential system $\mathcal{A}$ if $C_L=C_K$ and $L$ is generated as a field extension of $K$ by the entries of a \emph{fundamental solution matrix} $U\in\mathrm{GL}_N(L)$ such that $\delta_i(U)=A_iU$ for each $i=1,\dots,n$. The existence and uniqueness (up to \mbox{$K$-$\Delta$-isomorphism}) of Picard-Vessiot extensions is guaranteed by the assumption that $C_K$ is algebraically closed of characteristic zero, which we impose from now on. In this case, the group of $K$-$\Delta$-automorphisms of $L$ over $K$ is the \emph{differential Galois group} $\gald(L/K)$, which is identified with a linear algebraic \mbox{group via}
\begin{equation} \label{eq:galois-rep}\gald(L/K)\hookrightarrow \mathrm{GL}_N(C_K):\gamma\mapsto U^{-1}\cdot\gamma(U)=:M_\gamma.\end{equation}
  A different choice of fundamental solution matrix $U$ for $\mathcal{A}$ results in a conjugate $C_K$-linear representation.

It is well-known that every (separable) algebraic field extension $L$ of $K$ is automatically a $\Delta$-field extension of $K$. It is relevant for us to recall the recipe. For any derivation $\delta$ acting on $K$ and any $\alpha\in L$ with minimal polynomial $p(y)\in K[y]$, there exists a unique derivation on $K(\alpha)$ extending $\delta$, defined by \begin{equation}
    \label{eq:delta-ext}
\delta(\alpha)=-p^\delta(\alpha)/p'(\alpha),\end{equation} where $p^\delta$ is obtained by applying $\delta$ to the coefficients, and $p'$ denotes the usual derivative of a polynomial. 

\begin{theorem}[Kolchin] \label{thm:kolchin} $L$ is a finite Galois extension of $K$ if and only if $L$ is a Picard-Vessiot extension of $K$ whose differential Galois group is finite, and moreover in this case $\gal(L/K)=\gald(L/K)$.
\end{theorem}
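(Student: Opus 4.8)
The plan is to prove both directions of the biconditional, using the extension-of-derivation recipe from equation~\eqref{eq:delta-ext} as the crucial bridge between the algebraic and differential settings.

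First, I would establish the forward direction: suppose $L/K$ is a finite Galois extension in the classical sense, with $\gal(L/K)$ its (finite) Galois group. By the recipe recalled around equation~\eqref{eq:delta-ext}, each derivation $\delta_i \in \Delta$ extends uniquely from $K$ to $L$, so $L$ is automatically a $\Delta$-field extension of $K$. I would then check that $C_L = C_K$: since $C_K$ is algebraically closed and $L/K$ is algebraic, any $\Delta$-constant of $L$ is algebraic over $C_K$ and hence already lies in $C_K$. Next I would exhibit $L$ as a Picard-Vessiot extension. Because $\gal(L/K)$ is finite, $L/K$ is finite-dimensional as a $K$-vector space; choosing a $K$-basis $\alpha_1,\dots,\alpha_N$ of $L$, the regular representation gives matrices recording how each $\delta_i$ acts, and uniqueness of the extended derivation forces the resulting system to be integrable. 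Assembling a fundamental solution matrix from the Galois conjugates of a primitive element (a companion-type or Vandermonde-type construction) realizes $L$ as the Picard-Vessiot extension of an integrable system, with the $K$-$\Delta$-automorphisms coinciding with the ordinary field automorphisms.

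For the converse, I would assume $L/K$ is a Picard-Vessiot extension whose differential Galois group $\gald(L/K)$ is finite. The representation in equation~\eqref{eq:galois-rep} embeds this finite group into $\mathrm{GL}_N(C_K)$, and the fundamental theorem of differential Galois theory (from \cite{vdps}) gives a Galois correspondence between intermediate $\Delta$-fields and closed subgroups. Since the group is finite, every subgroup is closed, and the dimension of $L$ over $K$ equals the order of the group, so $L/K$ is a finite field extension. The key point is that finiteness of the differential Galois group forces $L/K$ to be algebraic and normal: the fixed field of the full group is exactly $K$ (by the PV axioms that $C_L = C_K$ and that $K$ is the fixed field), so $L/K$ is Galois in the classical sense with $\gal(L/K) = \gald(L/K)$.

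The main obstacle I expect is the careful identification of the two automorphism groups, i.e.\ showing that in the finite case a $K$-$\Delta$-automorphism is the same thing as an ordinary $K$-field automorphism. The nontrivial content is that any field automorphism $\sigma$ of $L$ over $K$ automatically commutes with each $\delta_i$: this follows because the extended derivation on $L$ is \emph{unique}, so $\sigma \circ \delta_i \circ \sigma^{-1}$ is another extension of $\delta_i|_K$ and must therefore equal $\delta_i$. Making this argument precise, together with verifying that the Picard-Vessiot property (particularly the no-new-constants condition $C_L = C_K$ and the minimality of $L$) aligns exactly with classical normality and separability, is where the real care is needed; the rest reduces to bookkeeping with the Galois correspondence and the representation in equation~\eqref{eq:galois-rep}.
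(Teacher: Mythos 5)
The paper does not actually prove this statement: it is quoted as Kolchin's theorem, with \cite[Chapter VI]{kolchin} and \cite[Appendix D.2]{vdps} given as the sources, so there is no in-paper argument to measure yours against. Your sketch follows the standard textbook route, and its architecture is sound: extend each $\delta_i$ uniquely to $L$ via \eqref{eq:delta-ext}, check there are no new constants, build an integrable system from the action of the $\delta_i$ on a $K$-basis of $L$, and in the converse direction combine the differential Galois correspondence (the fixed field of $\gald(L/K)$ is $K$) with Artin's theorem. Your key observation --- that uniqueness of the extended derivation forces every $K$-field automorphism of $L$ to commute with the $\delta_i$, so that $\gal(L/K)$ and $\gald(L/K)$ are literally the same group --- is exactly the right pivot, and the same uniqueness argument (applied to the commutator $\delta_i\delta_j-\delta_j\delta_i$, which restricts to zero on $K$) is what gives integrability.

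Two steps need repair before this is a proof. First, $C_L=C_K$ does not follow merely from ``$L/K$ is algebraic and $C_K$ is algebraically closed'': you must show a constant $c\in C_L$ is algebraic over $C_K$ rather than just over $K$. This requires applying $\delta$ to the minimal polynomial relation $p(c)=0$, using $\delta(c)=0$ to get $p^{\delta}(c)=0$, and invoking minimality of $p$ to conclude that the coefficients of $p$ are themselves constants. Second, the fundamental solution matrix: a Vandermonde matrix of conjugates of a primitive element does not satisfy $\delta_i(\mathbf{y})=A_i\mathbf{y}$, and the companion-matrix construction is tied to a single derivation and does not directly generalize to commuting $\delta_1,\dots,\delta_n$. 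The correct object, continuing your regular-representation idea, is $U=(\sigma_j(\alpha_i))_{i,j}$, where $\alpha_1,\dots,\alpha_N$ is a $K$-basis with $\delta_\ell(\boldsymbol{\alpha})=A_\ell\boldsymbol{\alpha}$ and $\sigma_j$ ranges over $\gal(L/K)$: each column is a solution because $\sigma_j$ commutes with the $\delta_\ell$ and fixes the $A_\ell$, invertibility of $U$ is Dedekind's independence of characters (equivalently $L\otimes_K L\cong\prod_{\sigma}L$), and minimality is automatic since the identity column already generates $L$ over $K$. With those two points filled in, the rest of your outline goes through.
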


\section{Complex Reflection Groups} \label{sec:crg}

We refer to \cite{lehrer} for the material in this section. We say that $M \in \glnc$ is a \emph{complex reflection} if $M$ has finite order and $M$ fixes pointwise some codimension-one hyperplane. A finite group $G \subset \glnc$ generated by reflections is called a \emph{complex reflection group}. Such a $G$ acts on $S:=\mathbb{C}[x_1,\dots,x_n]$ by $\gamma_M(f)(\mathbf{x}):=f\bigl(\mathbf{x}\cdot M^{-\top}\bigr)$ with $\mathbf{x}:=(x_1,\dots,x_n)$ as in \cite[\S3.2]{lehrer}, and this action extends naturally to $L:=\mathbb{C}(x_1,\dots,x_n)$. We say $f \in S$ is \mbox{$G$\emph{-invariant}} if $\gamma_M( f) = f$ for every $M \in G$. We denote by $S^G\subseteq S$ the $\mathbb{C}$-subalgebra of $G$-invariant polynomials. The following result characterizes complex reflection groups among all finite groups of complex matrices as those having the best possible invariants \cite{chevalley,st}.
\begin{theorem}[Chevalley-Shephard-Todd] \label{thm:cst}
 A subgroup $G \subset \glnc$ is a complex reflection group if and only if $S^G=\mathbb{C}[\phi_1,\dots,\phi_n]$ for some homogeneous, $\mathbb{C}$-algebraically independent $G$-invariants $\phi_1(\mathbf{x}),\dots,\phi_n(\mathbf{x})$.
\end{theorem}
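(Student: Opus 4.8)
The plan is to prove the two implications separately, extracting the cheap structural facts first. Write $R:=\frac{1}{|G|}\sum_{M\in G}\gamma_M$ for the Reynolds operator, a $\mathbb{C}$-linear projection of $S$ onto $S^G$, let $S^G_+$ and $S_+$ denote the positive-degree parts of $S^G$ and $S$, and set $I:=S^G_+\cdot S$ (the Hilbert ideal). Since $G$ is finite, $S$ is integral over $S^G$, so $S^G$ is a finitely generated graded $\mathbb{C}$-algebra with $\mathrm{trdeg}_{\mathbb{C}}S^G=n$; hence if $S^G$ is polynomial it is automatically on exactly $n$ homogeneous generators, and in both directions the real content is the polynomial/reflection structure rather than the numerical count. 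The single geometric input I would use throughout is that for a reflection $s\in G$ fixing a hyperplane $\{\alpha=0\}$ with $\alpha$ a linear form, the difference $\gamma_s(f)-f$ vanishes on that hyperplane and is therefore divisible by $\alpha$ in $S$, for every $f\in S$.

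For the forward implication (Chevalley), the crux is the lemma: if $f_1,\dots,f_k\in S^G$ are homogeneous with $f_1\notin(f_2,\dots,f_k)S^G$ and $a_1,\dots,a_k\in S$ are homogeneous with $\sum_i a_if_i=0$, then $a_1\in I$. I would prove this by induction on $\deg a_1$; the base case $\deg a_1=0$ is immediate, since otherwise $f_1\in(f_2,\dots,f_k)S^G$. For the step, fix a reflection $s$ with form $\alpha$ and write $\gamma_s(a_i)=a_i+\alpha b_i$ with $\deg b_i<\deg a_i$; applying $\gamma_s$ to the relation and using $\gamma_s(f_i)=f_i$ yields $\alpha\sum_i b_if_i=0$, hence $\sum_i b_if_i=0$, so the inductive hypothesis gives $b_1\in I$ and thus $\gamma_s(a_1)-a_1=\alpha b_1\in I$. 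As reflections generate $G$, this propagates to $\gamma_M(a_1)-a_1\in I$ for all $M\in G$; averaging gives $R(a_1)-a_1\in I$, and since $R(a_1)\in S^G_+\subseteq I$ we conclude $a_1\in I$. With the lemma in hand I would fix a minimal homogeneous generating set $\phi_1,\dots,\phi_m$ of $S^G$ and prove algebraic independence. From a nonzero relation $P(\phi_1,\dots,\phi_m)=0$ of minimal (weighted) degree, the elements $f_i:=(\partial P/\partial y_i)(\phi)\in S^G$ are not all zero (else $P$ would be constant); after reindexing so that $f_1,\dots,f_r$ minimally generate $(f_1,\dots,f_m)S^G$ and $f_i=\sum_{j\le r}h_{ij}f_j$ for $i>r$, the chain-rule identities $\sum_i f_i\,\partial\phi_i/\partial x_\ell=0$ rewrite as relations to which the lemma applies, forcing the coefficient of $f_1$ into $I$ for every $\ell$. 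Multiplying by $x_\ell$, summing over $\ell$, and applying Euler's identity together with $R$ then yields $\phi_1\in(\phi_2,\dots,\phi_m)S^G+(S^G_+)^2$, contradicting minimality. Algebraic independence plus $\mathrm{trdeg}_{\mathbb{C}}S^G=n$ forces $m=n$.

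For the backward implication (Shephard--Todd) I would compare Hilbert series. With $d_i:=\deg\phi_i$, the Hilbert series of $S^G=\mathbb{C}[\phi_1,\dots,\phi_n]$ is $\prod_{i=1}^n(1-t^{d_i})^{-1}$, which Molien's formula identifies with $\frac{1}{|G|}\sum_{M\in G}\det(1-tM)^{-1}$. Matching Laurent expansions at $t=1$ reads off $|G|=\prod_i d_i$ from the order-$n$ pole and $\#\{\text{reflections of }G\}=\sum_i(d_i-1)$ from the order-$(n-1)$ pole (to which only the identity and reflections contribute). Let $G'\subseteq G$ be the normal subgroup generated by all reflections of $G$. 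The forward implication, already proved, makes $S^{G'}=\mathbb{C}[\psi_1,\dots,\psi_n]$ polynomial with degrees $d'_i:=\deg\psi_i$, and the same two identities hold for $G'$; since $G$ and $G'$ contain exactly the same reflections, $\sum_i(d_i-1)=\sum_i(d'_i-1)$, i.e. $\sum_i d_i=\sum_i d'_i$. Finally, $S^G\subseteq S^{G'}$ expresses each $\phi_i$ as a polynomial in the $\psi_j$, and the chain rule factors $(\partial\phi_i/\partial x_\ell)=(\partial\phi_i/\partial\psi_j)(\partial\psi_j/\partial x_\ell)$; both outer Jacobians being nonsingular, so is the middle matrix $(\partial\phi_i/\partial\psi_j)$, whose $(i,j)$ entry is homogeneous of degree $d_i-d'_j$. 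Every term of its determinant then has degree $\sum_i d_i-\sum_j d'_j=0$, and nonvanishing of the determinant forces some permutation $\sigma$ with $d_i\ge d'_{\sigma(i)}$ for all $i$; combined with $\sum_i d_i=\sum_i d'_i$ this gives $d_i=d'_{\sigma(i)}$, hence $\prod_i d_i=\prod_i d'_i$, i.e. $|G|=|G'|$, so $G=G'$ is generated by reflections.

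The main obstacle is the forward implication's core: Chevalley's lemma and the extraction of algebraic independence from it. This is the only place where the reflection hypothesis is genuinely exploited (through the divisibility of $\gamma_s(f)-f$ by the hyperplane form), and it is where the graded commutative-algebra bookkeeping with the Hilbert ideal, Euler's identity, and the Reynolds operator must be handled carefully; once it is available, the backward implication reduces to the numerical comparison above. I would also need to confirm the standard supporting facts used above --- finite generation of $S^G$, integrality of $S$ over $S^G$, and Molien's formula --- but these are routine and available in \cite{lehrer}.
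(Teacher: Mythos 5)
The paper offers no proof of Theorem~\ref{thm:cst}: it is stated as a classical result and deferred entirely to the cited literature \cite{chevalley,st,lehrer}. Your argument is, in substance, the standard proof found in those references (and in Bourbaki/Humphreys): Chevalley's coefficient lemma --- driven by the divisibility of $\gamma_s(f)-f$ by the reflecting hyperplane's linear form, propagated through the group and averaged by the Reynolds operator --- for the forward direction, and the Molien-series pole comparison at $t=1$ combined with the Jacobian factorization $J_{\boldsymbol{\phi}}=(\partial\phi_i/\partial\psi_j)\,J_{\boldsymbol{\psi}}$ and the degree count on its determinant for the converse. I find no gap: the base case of the lemma via the Reynolds operator, the telescoping from single reflections to all of $G$, the extraction of $\phi_1\in(\phi_2,\dots,\phi_m)S^G+(S^G_+)^2$ via Euler's identity, and the identities $|G|=\prod_i d_i$ and $\#\{\text{reflections}\}=\sum_i(d_i-1)$ are all correct and the elided computations are routine. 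Two small points worth making explicit in a full write-up: in the Euler step one must check that the residual term lying in $S_+I$ lands in $(S^G_+)^2$ after applying $R$ (it does, since $R$ is an $S^G$-module map and $R(S_+)\subseteq S^G_+$); and with the contragredient action $\gamma_M(f)(\mathbf{x})=f(\mathbf{x}\cdot M^{-\top})$ used in the paper, the linear form $\alpha$ cutting out the relevant hyperplane is the one fixed by $M^{\top}$ rather than by $M$ --- harmless, but easy to garble.
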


\noindent Such a set of $\mathbb{C}$-algebraically independent generators for $S^G$ is called a set of \emph{fundamental invariants} for $G$. Any two sets of fundamental invariants have the same multiset of total degrees \cite[Prop.~3.25]{lehrer}. Having chosen an $n$-tuple $\boldsymbol{\phi}=(\phi_1,\dots,\phi_n)$ of fundamental invariants, let us denote the \emph{Jacobian matrix} \begin{equation} \label{eq:jac} J_{\boldsymbol{\phi}}:=\begin{pmatrix} \tfrac{\partial\phi_1}{\partial x_1} & \cdots & \tfrac{\partial\phi_1}{\partial x_n}\\ \vdots & & \vdots \\ \tfrac{\partial\phi_n}{\partial x_1} & \cdots & \tfrac{\partial\phi_n}{\partial x_n} \end{pmatrix}.\end{equation} The following result is proved in \cite[Lem.~6.6]{lehrer}.

\begin{lemma} \label{lem:jac}
    Suppose $G\subset \glnc$ is a complex reflection group and $\boldsymbol{\phi}$ is an $n$-tuple of fundamental invariants. Then $\gamma_M(J_{\boldsymbol{\phi}})=J_{\boldsymbol{\phi}}\cdot M$.
\end{lemma}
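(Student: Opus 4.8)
The plan is to compute the Jacobian matrix of the transformed tuple $\gamma_M(\boldsymbol{\phi})=\bigl(\gamma_M(\phi_1),\dots,\gamma_M(\phi_n)\bigr)$ directly from the chain rule, and then to feed in the $G$-invariance $\gamma_M(\phi_i)=\phi_i$ only at the very end. The key observation is that the identity to be proved is really the invariant special case of a general relation between the Jacobian of a composite and the Jacobian of its factors, so invariance is what collapses that general relation to the stated form.

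First I would fix $M\in G$ and set $g_i:=\gamma_M(\phi_i)$, so that by definition $g_i(\mathbf{x})=\phi_i(\mathbf{x}\cdot M^{-\top})$. Introducing the linear substitution $\mathbf{y}:=\mathbf{x}\cdot M^{-\top}$, whose $k$-th coordinate is $y_k=\sum_l x_l\,(M^{-\top})_{lk}$, I read off the constant derivatives $\partial y_k/\partial x_j=(M^{-\top})_{jk}$. The chain rule then gives
\[ \frac{\partial g_i}{\partial x_j}=\sum_{k=1}^n \frac{\partial \phi_i}{\partial x_k}(\mathbf{y})\cdot\frac{\partial y_k}{\partial x_j}=\sum_{k=1}^n \frac{\partial \phi_i}{\partial x_k}(\mathbf{x}\cdot M^{-\top})\cdot(M^{-\top})_{jk}. \]

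The second step is to recognize each evaluated partial $\tfrac{\partial \phi_i}{\partial x_k}(\mathbf{x}\cdot M^{-\top})$ as exactly $\gamma_M\bigl(\tfrac{\partial \phi_i}{\partial x_k}\bigr)$, since $\gamma_M$ is by definition precomposition with $\mathbf{x}\mapsto\mathbf{x}\cdot M^{-\top}$. Assembling the scalar identity into matrices, and using $(M^{-\top})_{jk}=(M^{-1})_{kj}$ to turn the sum over $k$ into an honest matrix product, I obtain the Jacobian of $\gamma_M(\boldsymbol{\phi})$ as $\gamma_M(J_{\boldsymbol{\phi}})\cdot M^{-1}$. Finally, invoking invariance $g_i=\phi_i$ makes the left-hand side equal to $J_{\boldsymbol{\phi}}$, so $J_{\boldsymbol{\phi}}=\gamma_M(J_{\boldsymbol{\phi}})\cdot M^{-1}$, and right-multiplying by $M$ yields the claim $\gamma_M(J_{\boldsymbol{\phi}})=J_{\boldsymbol{\phi}}\cdot M$.

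I do not expect a genuine obstacle here: the mathematical content is just the chain rule together with invariance. The only delicate point is the bookkeeping with the transpose convention $\mathbf{x}\cdot M^{-\top}$ for row vectors. One must keep straight which index of which (inverse, transpose) matrix is being summed and verify that the contraction over $k$ really produces $M^{-1}$ on the right rather than $M^{-\top}$, so that the final right-multiplication cleanly returns $M$. I would double-check this index matching carefully, since a single misplaced transpose is the one way this short argument could go wrong.
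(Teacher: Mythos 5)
Your proof is correct, and the index bookkeeping with the row-vector convention $\mathbf{x}\cdot M^{-\top}$ works out exactly as you computed: the contraction produces $M^{-1}$ on the right, and invariance then yields $\gamma_M(J_{\boldsymbol{\phi}})=J_{\boldsymbol{\phi}}\cdot M$. The paper does not prove this lemma itself but cites \cite[Lem.~6.6]{lehrer}, and your chain-rule-plus-invariance argument is precisely the standard proof given there.
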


\section{Goal and Main Result}

Suppose $G\subset\mathrm{GL}_n(\mathbb{C})$ is a complex reflection group. By Theorem~\ref{thm:cst}, we can identify $S^G$ with a free polynomial algebra $\mathbb{C}[z_1,\dots,z_n]$ (having chosen an $n$-tuple of fundamental invariants $\boldsymbol{\phi}\leftrightarrow \mathbf{z}$). As proved in \cite{chevalley}, $L^G=\mathbb{C}(z_1,\dots,z_n)=:K$. By Artin's Theorem \cite[Thm.~VI.1.8]{lang}, $L$ is a Galois extension of $K$ with Galois group $G\xrightarrow{\sim}\gal(L/K)$ via $M\mapsto \gamma_M$. We consider $K$ as a $\Delta$-field with derivations $\delta_i=\frac{\partial}{\partial z_i}$ for $i=1,\dots, n$. Then $L$ is a Picard-Vessiot extension of $K$ with $\gald(L/K)\simeq G$ by Theorem~\ref{thm:kolchin}.

\begin{goal}
    Compute explicitly an integrable linear differential system $\mathcal{A}$ over $K$ such that $L$ is the Picard-Vessiot extension of $K$ for $\mathcal{A}$ and whose differential Galois group is $G\subset\mathrm{GL}_n(\mathbb{C})$ acting as in \eqref{eq:galois-rep}.
\end{goal}

 This is the sense in which we wish to eponymously realize complex reflection groups as differential Galois groups. A first obstruction to doing this is that, although we know that the coordinate derivations $\delta_i$ on $K$ extend uniquely to derivations on $L$, the formula \eqref{eq:delta-ext} presumes we have already computed a separable polynomial satisfied by $x_1,\dots,x_n\in L$, which is a computation that we are hoping to avoid. Thus we begin by providing an alternative computation of the action of $\Delta$ on $L$, as follows. Denoting $\eta_{ij}:=\delta_j(x_i)\in L$, we see immediately that the operators
 $\delta_j=\sum_{i=1}^n \eta_{ij}\frac{\partial}{\partial x_i}$.
One shows using standard differential geometry arguments that the missing coefficients $\eta_{ij}\in L$ are computed by $J_{\boldsymbol{\phi}}^{-1} = (\eta_{ij})$, for the same Jacobian matrix \eqref{eq:jac}. Now that we can differentiate arbitrary elements of $L$ with respect to the coordinate derivations in $\Delta$, we can state and prove our main result, which accomplishes our aforementioned Goal.

\begin{theorem}\label{thm:main} Denote $A_i := \delta_i(J_{\boldsymbol{\phi}}) J_{\boldsymbol{\phi}}^{-1}\in\mathfrak{gl}_n(L)$ for $1\leq i \leq n$. Then:  
\begin{enumerate}
\item $A_i\in\mathfrak{gl}_n(K)$ for each $1\leq i \leq n$;
\item $\delta_i(A_j)-\delta_j(A_i)=A_iA_j-A_jA_i$ for $1\leq i,j\leq n$; and
\item $L$ is a Picard-Vessiot extension of $K$ for the system $\mathcal{A}=\bigl\{\delta_i(\mathbf{y})=A_i\mathbf{y} \ \big| \ 1\leq i \leq n\bigr\}$.
\end{enumerate}
\end{theorem}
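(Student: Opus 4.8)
My plan rests on a single structural observation: by the very definition $A_i=\delta_i(J_{\boldsymbol{\phi}})J_{\boldsymbol{\phi}}^{-1}$, the Jacobian matrix $J_{\boldsymbol{\phi}}$ satisfies $\delta_i(J_{\boldsymbol{\phi}})=A_iJ_{\boldsymbol{\phi}}$ and lies in $\mathrm{GL}_n(L)$ (its determinant is nonzero precisely because $\phi_1,\dots,\phi_n$ are $\mathbb{C}$-algebraically independent), so it is already a fundamental solution matrix for $\mathcal{A}$. Before treating the three claims I would record a compatibility fact used throughout: each $\gamma_M$ commutes with each $\delta_i$ on $L$. Indeed $\gamma_M^{-1}\circ\delta_i\circ\gamma_M$ is a derivation of $L$ agreeing with $\delta_i$ on $K$ (as $\gamma_M$ fixes $K$ pointwise and $\delta_i(K)\subseteq K$), so the uniqueness of the extension \eqref{eq:delta-ext} of $\delta_i$ across the finite separable extension $L/K$ forces $\gamma_M^{-1}\circ\delta_i\circ\gamma_M=\delta_i$. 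Applying the same uniqueness to the derivation $[\delta_i,\delta_j]$, which vanishes on $K$, shows the extended derivations still commute pairwise on $L$.

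For claim (1), since $K=L^G$ it suffices to check $\gamma_M(A_i)=A_i$ for every $M\in G$. Using that $\gamma_M$ is a ring homomorphism commuting with $\delta_i$, together with Lemma~\ref{lem:jac} and $\delta_i(M)=0$ (as $M$ has constant entries), I would compute
\begin{equation*}
\gamma_M(A_i)=\delta_i\bigl(\gamma_M(J_{\boldsymbol{\phi}})\bigr)\cdot\gamma_M(J_{\boldsymbol{\phi}})^{-1}=\delta_i(J_{\boldsymbol{\phi}}M)\cdot(J_{\boldsymbol{\phi}}M)^{-1}=\delta_i(J_{\boldsymbol{\phi}})MM^{-1}J_{\boldsymbol{\phi}}^{-1}=A_i,
\end{equation*}
whence each entry of $A_i$ is $G$-invariant and therefore lies in $K$.

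Claim (2) is then the standard integrability identity forced by the existence of the fundamental solution matrix: differentiating $\delta_i(J_{\boldsymbol{\phi}})=A_iJ_{\boldsymbol{\phi}}$ by $\delta_j$, then $\delta_j(J_{\boldsymbol{\phi}})=A_jJ_{\boldsymbol{\phi}}$ by $\delta_i$, subtracting, and invoking $\delta_i\delta_j=\delta_j\delta_i$ on $L$ yields $\bigl(\delta_i(A_j)-\delta_j(A_i)\bigr)J_{\boldsymbol{\phi}}=(A_iA_j-A_jA_i)J_{\boldsymbol{\phi}}$; right-multiplying by $J_{\boldsymbol{\phi}}^{-1}$ gives the claim.

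For claim (3) I would verify the two defining properties of a Picard-Vessiot extension. That $C_L=C_K$ follows because $(\eta_{ij})=J_{\boldsymbol{\phi}}^{-1}$ is invertible over $L$, so $\delta_1,\dots,\delta_n$ and $\tfrac{\partial}{\partial x_1},\dots,\tfrac{\partial}{\partial x_n}$ are related by an invertible matrix and hence have the same common kernel on $L$, namely $\mathbb{C}=C_K$. That $L$ is generated over $K$ by the entries of $J_{\boldsymbol{\phi}}$ is where I expect the real content to lie: setting $L'=K(\text{entries of }J_{\boldsymbol{\phi}})$, the Galois correspondence identifies $L'=L^H$ with $H=\{M\in G:\gamma_M|_{L'}=\mathrm{id}\}$, and by Lemma~\ref{lem:jac} the condition $\gamma_M(J_{\boldsymbol{\phi}})=J_{\boldsymbol{\phi}}M=J_{\boldsymbol{\phi}}$ holds if and only if $M=I$, so $H$ is trivial and $L'=L$. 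The main obstacle is thus this generation step, where the complex-reflection-group hypothesis genuinely enters through Lemma~\ref{lem:jac}, which at once describes the Galois action on $J_{\boldsymbol{\phi}}$ and shows that only the identity fixes its entries; keeping careful track of the commutativity of the extended derivations is the secondary technical point.
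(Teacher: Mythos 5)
Your proposal is correct and follows essentially the same route as the paper's proof sketch: Lemma~\ref{lem:jac} plus the Galois correspondence for (1), equality of mixed partials for (2), and faithfulness of the $G$-action on $K(J_{\boldsymbol{\phi}})$ for (3). The extra details you supply --- justifying that the extended derivations commute with each other and with the $\gamma_M$ via uniqueness of derivation extensions, and verifying $C_L=C_K$ --- are points the paper leaves implicit (the latter already follows from Theorem~\ref{thm:kolchin}), so they refine rather than alter the argument.
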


\begin{proof}[Proof sketch.] (1). The $M\in G$ act on $L$ by $\Delta$-automorphisms $\gamma_M$. By Lemma~\ref{lem:jac}, $\gamma_M(J_{\boldsymbol{\phi}})=J_{\boldsymbol{\phi}}\cdot M$. Hence each $\gamma_M(A_i)=A_i$ for every $M\in G$. By the Galois correspondence, the $A_i$ must have entries in $K$.

(2). This is a familiar computation: using that $\delta_i\delta_j=\delta_j\delta_i$ on $L$, expand $\delta_i(\delta_j(J_{\boldsymbol{\phi}}))=\delta_j(\delta_i(J_{\boldsymbol{\phi}}))$.

(3). Since $\gamma_M\mapsto J_{\boldsymbol{\phi}}^{-1}\gamma_M(J_{\boldsymbol{\phi}})=M$ is injective, $G$ acts faithfully on $\tilde{L}:=K(J_{\boldsymbol{\phi}})\subseteq L$, and therefore $\tilde{L}=L$ by the Galois correspondence.\qedhere
\end{proof}

\section{A Small Dihedral Example}
The group $D_8=\left\{\left(\begin{smallmatrix}    \pm 1 & 0 \\ 0 & \pm 1  \end{smallmatrix}\right), \left(\begin{smallmatrix}   0 & \pm 1 \\ \pm 1 & 0 \end{smallmatrix}\right) \right\}$ of symmetries of the square, denoted $G(2,1,2)$ in the notation of \cite{st}, is generated by the reflections $M_1=\left(\begin{smallmatrix} 1 & 0 \\ 0 & -1\end{smallmatrix}\right)$ and $M_2=\left(\begin{smallmatrix} 0 & 1 \\ 1 & 0\end{smallmatrix}\right)$. It acts on polynomials in $S=\mathbb{C}[x_1,x_2]$ by $\gamma_{M_1}(f)(x_1,x_2)=f(x_1,-x_2)$ and $\gamma_{M_2}(f)(x_1,x_2)=f(x_2,x_1)$.
The algebra of invariants is $S^G=\mathbb{C}[z_1,z_2]$ with $z_1=\phi_1(x_1,x_2)=x_1^2+x_2^2$ and $z_2=\phi_2(x_1,x_2)=x_1^2x_2^2$. For these fundamental invariants, the Jacobian 
  \[ J=\left(\begin{smallmatrix} 2x_1 \hphantom{X}& 2x_2 \\ 2x_1x_2^2 \hphantom{X}& 2x_1^2x_2\end{smallmatrix}\right);\quad\qquad\text{and its inverse}\quad\qquad J^{-1}=\tfrac{1}{2x_1x_2(x_1^2-x_2^2)}\left(\begin{smallmatrix} x_1^2x_2 \hphantom{X}& -x_2 \\ -x_1x_2^2 \hphantom{X}& x_1\end{smallmatrix}\right). \]
  Thus the derivations $\frac{\partial}{\partial z_1}=\delta_1$ and $\frac{\partial}{\partial z_2}=\delta_2$ acting on $S=\mathbb{C}[x_1,x_2]$ are given by
  \[\delta_{1}=\tfrac{x_1}{2(x_1^2-x_2^2)}\cdot\tfrac{\partial}{\partial x_1} - \tfrac{x_2}{2(x_1^2-x_2^2)}\cdot\tfrac{\partial}{\partial x_2}\qquad\quad \text{and}\qquad \quad\delta_{2}=\tfrac{1}{2x_2(x_1^2-x_2^2)}\cdot\tfrac{\partial}{\partial x_2} - \tfrac{1}{2x_1(x_1^2-x_2^2)}\cdot\tfrac{\partial}{\partial x_1},\] which can be checked explicitly from the symbolic expressions $x_1=\sqrt{\frac{z_1\pm\sqrt{z_1^2-4z_2}}{2}}$ and $x_2=\sqrt{\frac{z_1\mp\sqrt{z_1^2-4z_2}}{2}}$. Differentiating entrywise, $\delta_{1}(J)=\tfrac{1}{x_1^2-x_2^2}\left(\begin{smallmatrix} x_1 \hphantom{X} & -x_2 \\ -x_1x_2^2\hphantom{X} & x_1^2x_2\end{smallmatrix}\right)$ and $\delta_{2}(J)=\tfrac{1}{x_1^2-x_2^2}\left(\begin{smallmatrix} -x_1^{-1} \hphantom{X}& x_2^{-1} \\ 2x_1-x_1^{-1}x_2^2 \hphantom{X} & x_1^2x_2^{-1}-2x_2\end{smallmatrix}\right)$, 
whence the $A_1:=\delta_{1}(J)J^{-1}$ and $A_2:=\delta_{2}(J)J^{-1}$ of Theorem~\ref{thm:main} are initially computed as \[ A_1=\tfrac{1}{2x_1x_2(x_1^2-x_2^2)^2}\left(\begin{smallmatrix}x_1^3x_2 +x_1x_2^3\hphantom{X} & -2x_1x_2 \\ -2x_1^3x_2^3 \hphantom{X}& x_1^3x_2+x_2x_2^3\end{smallmatrix}\right) \qquad\!\text{and}\qquad\!
    A_2=\tfrac{1}{2x_1x_2(x_1^2-x_2^2)^2}\left(\begin{smallmatrix}-2x_1x_2 \hphantom{X}& x_1x_2^{-1} + x_1^{-1}x_2 \\ x_1^3x_2+x_1x_2^3 \hphantom{X}& x_1^3x_2^{-1}-4x_1x_2+x_1^{-1}x_2^3\end{smallmatrix}\right).\]
According to Theorem~\ref{thm:main}, the entries of $A_1$ and $A_2$ should actually be rational in $z_1$ and $z_2$, and not merely in $x_1$ and $x_2$. Our software carries out the necessary rewriting to discover automatically that indeed \[A_1=\tfrac{1}{2(z_1^2-4z_2)}\Bigl(\begin{smallmatrix} z_1 & -2 \\ -2z_2 & z_1   \end{smallmatrix}\Bigr) \qquad \text{and} \qquad A_2=\tfrac{1}{2(z_1^2-4z_2)}\left(\begin{smallmatrix} -2 & z_1z_2^{-1} \\ z_1 & z_1^2z_2^{-1}-6   \end{smallmatrix}\right).\]

\section{Algorithmic Considerations}
As we saw in the small dihedral example above, the invariant entries of the matrices $A_1,\dots,A_n$ from Theorem~\ref{thm:main} are initially expressed as elements of $L=\mathbb{C}(\mathbf{x})$, which leads us to the \emph{rewriting problem} of expressing them as elements of $K=\mathbb{C}(\mathbf{z})$. First let us explain how to reduce this rewriting problem for rational functions to the simpler rewriting problem for homogeneous polynomials. Denoting by $d_i$ the homogeneous degree of $\phi_i(\mathbf{x})$, the entries along the $i$-th row of the Jacobian $J_{\boldsymbol{\phi}}$ are all homogeneous of degree $d_i-1$. Hence $\mathrm{det}(J_{\boldsymbol{\phi}})$ is homogeneous of degree $\sum_i=(d_i-1)$, and moreover the entries along the $j$-th column of the adjugate $\mathrm{det}(J_{\boldsymbol{\phi}})J_{\boldsymbol{\phi}}^{-1}$ are all homogeneous polynomials of degree $\sum_{k\neq n-j}(d_k-1)$. Thus the $i$-th row of $\mathrm{det}(J_{\boldsymbol{\phi}})\delta_\ell(J_{\boldsymbol{\phi}})$ consists of homogeneous polynomials of degree $d_i-2+\sum_{k\neq n-\ell}(d_k-1)$. Hence, the $(i,j)$-entry of $\mathrm{det}(J_{\boldsymbol{\phi}})^2 A_\ell$ is a homogeneous polynomial of degree $d_i-2+\sum_{k\neq n-\ell}(d_k-1)+\sum_{k\neq n-j}(d_k-1)$. By \cite[Cor.~6.7]{st}, $\mathrm{det}(J_{\boldsymbol{\phi}})^m$ is invariant for some $m\geq 2$. Thus our rational rewriting problem is reduced to the rewriting problem for the invariant homogeneous \emph{polynomial} entries of the matrices $\mathrm{det}(J_{\boldsymbol{\phi}})^{m}A_\ell$.

We know by Theorem~\ref{thm:cst} that if (and only if) a given polynomial $f(\mathbf{x}) \in \mathbb{C}[\mathbf{x}]$ is invariant, there exists a unique $\tilde{f}(\mathbf{z}) \in \mathbb{C}[\mathbf{z}]$ such that $\tilde{f}(\boldsymbol{\phi}) = f(\mathbf{x})$. Finding this $\tilde{f}$ is reduced to linear algebra. Indeed, denoting by $E_f:=\bigl\{(e_1,\dots,e_n)\in\mathbb{Z}^n_{\geq 0} \ \big| \ \sum_{i=1}^ne_id_i=\mathrm{deg}_\mathbf{x}(f)\bigr\}$, there exist unique $c_{\mathbf{e}}\in\mathbb{C}$ for $\mathbf{e}\in E_f$ such that $f(\mathbf{x})=\sum_{\mathbf{e}\in E_f}c_\mathbf{e}\prod_{i=1}^n\phi_i(\mathbf{x})^{e_i}$. Solving the resulting linear system, we obtain $\tilde{f}(\mathbf{z})=\sum_{\mathbf{e}}c_{\mathbf{e}}\mathbf{z}^{\mathbf{e}}$. Using a preliminary Mathematica \cite{mathematica} implementation of this recipe, we have successfully computed explicitly the integrable systems for primitive complex reflection groups of ranks $2$ and $3$. 

\section{Results for the Tetrahedral Groups} Below is the output of our algorithm for the first four primitive complex reflection in the classification of \cite{st}, with fundamental invariants computed as in \cite[\S6.6]{lehrer}.
\[\footnotesize\begin{array}{|c|l|c|c|}
\hline
\text{Group} & \text{Fundamental Invariants} & A_1 & A_2 \\ \hline
G_4 \vphantom{\begin{matrix} X \\ X \\ X \\ X \end{matrix}} & 
\begin{aligned}
    z_1 &:= x_1^4 + 2i \sqrt{3} x_1^2 x_2^2 + x_2^4\\ z_2 &:= x_1^5 x_2 - x_1 x_2^5\end{aligned}
&
\begin{pmatrix}  \frac{3 z_1^2}{4 \left(z_1^3-12 i \sqrt{3} z_2^2\right)} & -\frac{6 i \sqrt{3} z_2}{z_1^3-12 i \sqrt{3} z_2^2} \\
-\frac{15 z_1 z_2}{8 \left(z_1^3-12 i \sqrt{3} z_2^2\right)} & \frac{5 z_1^2}{4 \left(z_1^3-12 i \sqrt{3} z_2^2\right)}\end{pmatrix}
&
\begin{pmatrix}  -\frac{6 i \sqrt{3} z_2}{z_1^3-12 i \sqrt{3} z_2^2} & \frac{4 i \sqrt{3} z_1}{z_1^3-12 i \sqrt{3} z_2^2} \\
 \frac{5 z_1^2}{4 \left(z_1^3-12 i \sqrt{3} z_2^2\right)} & -\frac{10 i \sqrt{3} z_2}{z_1^3-12 i \sqrt{3} z_2^2}\end{pmatrix}
\\ \hline
G_5 \vphantom{\begin{matrix} X \\ X \\ X \\ X \end{matrix}} & \begin{aligned}
    z_1 &:= \bigl(x_1^4 + 2i \sqrt{3} x_1^2 x_2^2 + x_2^4\bigr)^3\\ z_2 &:= x_1^5 x_2 - x_1 x_2^5\end{aligned} &
\begin{pmatrix} \frac{11 z_1-96 i \sqrt{3} z_2^2}{12 z_1 \left(z_1-12 i \sqrt{3} z_2^2\right)} & -\frac{6 i \sqrt{3} z_2}{z_1-12 i \sqrt{3} z_2^2} \\ -\frac{5 z_2}{24 z_1 \left(z_1-12 i \sqrt{3} z_2^2\right)} & -\frac{5}{12 \left(z_1-12 i \sqrt{3} z_2^2\right)}\end{pmatrix}
&
\begin{pmatrix}  -\frac{6 i \sqrt{3} z_2}{z_1-12 i \sqrt{3} z_2^2} & \frac{12 i \sqrt{3} z_1}{z_1-12 i \sqrt{3} z_2^2} \\
 -\frac{5}{12 \left(z_1-12 i \sqrt{3} z_2^2\right)} & -\frac{10 i \sqrt{3} z_2}{z_1-12 i \sqrt{3} z_2^2} \end{pmatrix}
  \\ \hline
G_6 \vphantom{\begin{matrix} X \\ X \\ X \\ X \end{matrix}} & \begin{aligned}
    z_1 &:= x_1^4 + 2i \sqrt{3} x_1^2 x_2^2 + x_2^4\\ z_2 &:= \bigl(x_1^5 x_2 - x_1 x_2^5\bigr)^2\end{aligned} &
\begin{pmatrix} \frac{3 z_1^2}{4 \left(z_1^3-12 i \sqrt{3} z_2\right)} & \frac{3 i \sqrt{3}}{z_1^3-12 i \sqrt{3} z_2} \\
-\frac{15 z_1 z_2}{4 \left(z_1^3-12 i \sqrt{3} z_2\right)} & \frac{5 z_1^2}{4 \left(z_1^3-12 i \sqrt{3} z_2\right)}\end{pmatrix}
 &
 \begin{pmatrix}   \frac{3 i \sqrt{3}}{z_1^3-12 i \sqrt{3} z_2} & \frac{i \sqrt{3} z_1}{z_2 \left(z_1^3-12 i \sqrt{3} z_2\right)} \\
     \frac{5 z_1^2}{4 \left(z_1^3-12 i \sqrt{3} z_2\right)} & \frac{z_1^3-22 i \sqrt{3} z_2}{2 z_2 \left(z_1^3-12 i \sqrt{3} z_2\right)} \end{pmatrix}
 \\ \hline
G_7 \vphantom{\begin{matrix} X \\ X \\ X \\ X \end{matrix}} & \begin{aligned}
    z_1 &:= \bigl(x_1^4 + 2i \sqrt{3} x_1^2 x_2^2 + x_2^4\bigr)^3\\ z_2 &:= \bigl(x_1^5 x_2 - x_1 x_2^5\bigr)^2\end{aligned} &
\begin{pmatrix}  \frac{11 z_1-96 i \sqrt{3} z_2}{12 z_1 \left(z_1-12 i \sqrt{3} z_2\right)} & \frac{3 i \sqrt{3}}{z_1-12 i \sqrt{3} z_2} \\
     -\frac{5 z_2}{12 z_1 \left(z_1-12 i \sqrt{3} z_2\right)} & -\frac{5}{12 \left(z_1-12 i \sqrt{3} z_2\right)} \end{pmatrix}
&
\begin{pmatrix} \frac{3 i \sqrt{3}}{z_1-12 i \sqrt{3} z_2} & \frac{3 i \sqrt{3} z_1}{z_2 \left(z_1-12 i \sqrt{3} z_2\right)} \\
-\frac{5}{12 \left(z_1-12 i \sqrt{3} z_2\right)} & \frac{z_1-22 i \sqrt{3} z_2}{2 z_2 \left(z_1-12 i \sqrt{3} z_2\right)} \end{pmatrix}
 \\ \hline
\end{array}\]
The primitive complex reflection groups in rank $2$ of octahedral and icosahedral type all produce similarly compact and tractable outputs. In rank~$3$, some of the outputs look considerably more complicated.

\end{document}